\title[The numerical class of 
a surface on a toric manifold]
{The numerical class of 
a surface on a toric manifold} 
\author{Hiroshi Sato} 
\subjclass[2000]{Primary 14M25; Secondary 14E30, 14J45.}
\date{}
\keywords{Toric variety, Mori theory, $2$-Fano manifold.}
\address{Faculty of Economics and Information\\ 
Gifu Shotoku Gakuen University, 
1-38 Nakauzura Gifu 500-8288 Japan}
\email{hirosato@gifu.shotoku.ac.jp}
\newcommand{\xSupp}[0]{{\operatorname{Supp}}}
\newcommand{\Pic}[0]{{\operatorname{Pic}}}
\newcommand{\G}{\mathop{\rm G}\nolimits}
\newcommand{\R}{\mathop{\mathbb{R}}\nolimits}
\newcommand{\bP}{\mathop{\mathbb{P}}\nolimits}
\newcommand{\N}{\mathop{\rm N}\nolimits}
\newcommand{\NE}{\mathop{\rm NE}\nolimits}
\newcommand{\Z}{\mathop{\rm Z}\nolimits}
\newcommand{\bZ}{\mathop{\mathbb{Z}}\nolimits}
\newtheorem{thm}{Theorem}[section]
\newtheorem{lem}[thm]{Lemma}
\newtheorem{prop}[thm]{Proposition}
\newtheorem{lem-def}[thm]{Definition-Lemma}
\theoremstyle{definition}
\newtheorem{ex}[thm]{Example}
\newtheorem{defn}[thm]{Definition}
\newtheorem{rem}[thm]{Remark}
\newtheorem*{ack}{Acknowledgments}       
\newtheorem*{notation}{Notation}         
\newtheorem{say}[thm]{}
\begin{document}
\bibliographystyle{amsalpha+}

\dedicatory{Dedicated to Professor~Shihoko Ishii on her~sixtieth~birthday}

\begin{abstract}
In this paper, we give a method to describe the numerical 
class of a torus invariant surface on a projective toric 
manifold. As applications, we can classify toric 2-Fano 
manifolds of Picard number 2 or of dimension at most 4.  
\end{abstract}

\maketitle
\tableofcontents

\section{Introduction}\label{intro}
\thispagestyle{empty}
The classification of smooth toric Fano $d$-folds is an important and 
interesting problem. They are classified for $d=3$ by \cite{bat0} and \cite{watanabe1}, 
for $d=4$ by \cite{bat4} and \cite{sato1}, and for $d=5$ by \cite{kreuzernill}. 
In \O bro's recent excellent paper \cite{obro}, an algorithm which classify all the 
smooth toric Fano $d$-folds for any given natural number $d$ was constructed. 
So, we can say that the classification of smooth toric Fano varieties is completed. 

On the other hand, de Jong and Starr defined a special class of Fano manifolds 
called {\em $2$-Fano manifolds} in \cite{starr} (see Definition \ref{2fanodef}). So, 
we consider the problem of the classification of toric $2$-Fano manifolds 
as a next step. For this classification, we give a method to describe 
the numerical class of a $2$-cycle on projective toric manifolds 
(see Section \ref{kushi}). 
This method make calculations of intersection numbers 
much easier. 
As results, we obtain the classification of 
toric $2$-Fano manifolds for the case of Picard number $\rho(X)=2$ and 
for the case of $\dim(X)\le 4$. We remark that Nobili classified 
smooth toric $2$-Fano $4$-folds in \cite{nobili} by using a Maple program. 

The contents of this paper is as follows: 
In Section \ref{junbi}, we define the basic notation such as 
{\em nef} $2$-cocycle and {\em $2$-Mori cone} 
for our theory.  In Section \ref{kushi}, we define a polynomial $I_{Y/X}$ for a 
torus invariant subvariety $Y\subset X$. This polynomial has all the informations of 
intersection numbers of $Y$ on $X$. 
So, we can consider this polynomial as 
the numerical class of $Y$. For a some special surface $S$, $I_{S/X}$ 
has a good property to calculate intersection numbers (see Theorems \ref{shuteiri} 
and \ref{shuteiri2}). As applications, we classify toric $2$-Fano manifolds 
under some assumptions in Section \ref{zelda}.

\begin{ack} 
The author would like to thank Professor Osamu Fujino 
for advice and encouragement. 
He was partially supported 
by the Grant-in-Aid for Scientific Research (C) $\sharp$23540062 from JSPS.
\end{ack}

\begin{notation}
We will work over an algebraically closed field $k$ 
throughout this paper. 
We denote a projective toric $d$-fold 
by $X=X_\Sigma$, where 
$\Sigma$ is the associated fan in $N:=\bZ^d$. 
$\G(\Sigma)\subset N$ is the set of the primitive generators 
for the $1$-dimensional cones in $\Sigma$. 
\end{notation}

\section{Preliminaries}\label{junbi}
In this section, we explain the notation and some basic 
facts of the toric geometry and the birational geometry used in this paper. See 
\cite{fujisato}, \cite{fulton} and \cite{oda} 
for the detail. 

Let $X$ be a smooth projective toric $d$-fold. 
Put $\Z_2(X)$ be the free $\mathbb{Z}$-module 
of $2$-cocycles on $X$ and 
$\Z^2(X)$ the free $\mathbb{Z}$-module of $2$-cycles on $X$. 
We define the {\em numerical equivalence} ``$\equiv$'' on 
$\Z^2(X)$ and $\Z_2(X)$: 
A $2$-cocycle $E\in\Z^2(X)$ is {\em numerically equivalent to $0$}, that is, 
$E\equiv 0$  
if the intersection number $(E\cdot S)=0$ for any $2$-cycle 
$S\in\Z_2(X)$, while a $2$-cycle $S\in\Z_2(X)$ is {\em numerically equivalent} to $0$, 
that is, $S\equiv 0$  
if the intersection number $(E\cdot S)=0$ for any $2$-cocycle 
$E\in\Z^2(X)$. 
We define $\N^2(X):=(\Z^2(X)/\equiv)\otimes \R$ and 
$\N_2(X):=(\Z_2(X)/\equiv)\otimes \R$.

The following definitions are similar to the case of 
divisors and curves: 

\begin{defn}\label{nefness}
A $2$-cocycle $E\in\Z^2(X)$ is a {\em nef} $2$-cocycle 
if $(E\cdot S)\ge 0$ for 
any $2$-cycle $S\in\Z_2(X)$.
\end{defn}

\begin{defn}\label{2moricone}
For a projective toric manifold $X$, let $\NE_2(X)\subset \N_2(X)$ be 
the cone of numerical effective $2$-cycles. Namely,
$$\NE_2(X):=\left\{\left.\left[\sum_i a_iS_i\right]\in\N_2(X)\,\right|\,
a_i\ge 0   \right\}.$$
We call $\NE_2(X)\subset \N_2(X)$ 
the {\em $2$-Mori cone} of $X$.
\end{defn}

We should remark that $\N^l(X)$, $\N_l(X)$ and $\NE_l(X)$ can be defined 
for any $1\le l\le d$ similarly. 
 
The following is an immediate consequence of 
the {\em projectivity} of $X$:
\begin{prop}
${\rm NE}_2(X)$ is a strongly convex cone.
\end{prop}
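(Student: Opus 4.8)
The plan is to mimic the classical argument showing that the Mori cone $\NE_1(X)$ of a projective variety is strongly convex, transporting it to codimension-two-cycle (equivalently, $2$-cycle) setting via an ample divisor. Recall that a convex cone $C$ in a finite-dimensional real vector space is \emph{strongly convex} precisely when it contains no line, i.e. $C\cap(-C)=\{0\}$. So the goal is to show that if a class $\alpha\in\NE_2(X)$ satisfies $-\alpha\in\NE_2(X)$ as well, then $\alpha=0$ in $\N_2(X)$.

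First I would fix an ample divisor $H$ on $X$; this exists because $X$ is projective. Taking the self-intersection $H^2$ gives a nef $2$-cocycle in $\Z^2(X)$ (in the sense of Definition \ref{nefness}): indeed, for any effective $2$-cycle $S=\sum_i a_i S_i$ with $a_i\ge 0$, each $S_i$ is an irreducible surface and $(H^2\cdot S_i)=(H|_{S_i})^2>0$ by the projectivity/ampleness of $H$ restricted to the surface $S_i$, hence $(H^2\cdot S)\ge 0$, with equality only if every $a_i=0$. The key point is the strict positivity: $(H^2\cdot S_i)>0$ for every irreducible surface $S_i$, since $H|_{S_i}$ is ample on the projective surface $S_i$ and the self-intersection of an ample divisor on a surface is positive.

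Now take $\alpha\in\NE_2(X)\cap(-\NE_2(X))$. Write $\alpha=[\sum_i a_iS_i]$ with $a_i\ge0$ and $-\alpha=[\sum_j b_jT_j]$ with $b_j\ge0$, for irreducible torus-invariant surfaces $S_i,T_j$. Pairing with the nef class $H^2$ gives $(H^2\cdot\alpha)=\sum_i a_i(H^2\cdot S_i)\ge0$ and likewise $(H^2\cdot(-\alpha))=\sum_j b_j(H^2\cdot T_j)\ge0$; but these two numbers are negatives of each other, so both vanish. From $\sum_i a_i(H^2\cdot S_i)=0$ with all terms nonnegative and each $(H^2\cdot S_i)>0$, I conclude $a_i=0$ for all $i$, hence $\alpha=0$ in $\N_2(X)$. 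Therefore $\NE_2(X)\cap(-\NE_2(X))=\{0\}$, which is exactly strong convexity. Finiteness of dimension of $\N_2(X)$ — needed for "cone" to make sense and to avoid pathologies — follows from the fact that $X$ is a projective toric manifold, so its cycle groups modulo numerical equivalence are finitely generated.

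The only genuine subtlety, and the step I would be most careful about, is justifying $(H^2\cdot S_i)>0$ for an arbitrary irreducible (possibly singular) surface $S_i$ sitting inside $X$: one restricts $H$ to $S_i$, notes the restriction is still ample (ampleness is preserved under pullback along a closed immersion), and invokes that an ample Cartier divisor on a projective surface has positive self-intersection — this is standard but relies on the Nakai--Moishezon criterion or on the asymptotic Riemann--Roch estimate $h^0(S_i,mH|_{S_i})\sim \tfrac{m^2}{2}(H^2\cdot S_i)$, forcing the coefficient to be positive. Everything else is the routine "no line in the cone" packaging. I would present the proof in precisely this order: produce $H$, check $H^2$ is strictly positive on effective $2$-cycles, then run the line-elimination argument.
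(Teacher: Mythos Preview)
Your proof is correct and follows exactly the same idea as the paper's own argument: pick an ample divisor $H$ (the paper writes $D$), observe that $H^2$ pairs strictly positively with every nonzero effective $2$-cycle, and conclude strong convexity. The paper compresses this into two sentences without justifying the strict positivity $(H^2\cdot S)>0$, whereas you spell out that step via restriction to the surface and Nakai--Moishezon; your version is a more detailed rendering of the same proof.
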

\begin{proof}
Let $D$ be an ample divisor on $X$. Then, for any 
$S\in{\rm NE}_2(X)\setminus\{0\}$, we have 
$(D^{2}\cdot S)>0$. Namely, ${\rm NE}_2(X)$ is 
strongly convex.
\end{proof}
On the other hand, 
for the toric case, the following is obvious:
\begin{prop}
Let $X$ be a smooth projective toric $d$-fold. Then, 
${\rm NE}_2(X)$ is a polyhedral cone.
\end{prop}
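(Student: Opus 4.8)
The plan is to exhibit $\NE_2(X)$ as the cone generated by finitely many explicit classes, namely the numerical classes of the torus invariant surfaces of $X$. Since $X = X_\Sigma$ is a smooth complete toric variety, its fan $\Sigma$ has only finitely many cones; in particular there are only finitely many $(d-2)$-dimensional cones $\tau \in \Sigma$, and each such $\tau$ corresponds to a torus invariant surface $V(\tau) \subset X$. I would first recall this orbit-cone correspondence and note that the classes $[V(\tau)] \in \N_2(X)$, as $\tau$ ranges over the $(d-2)$-cones of $\Sigma$, are finite in number. Call them $[S_1], \dots, [S_m]$.

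Next I would argue that these classes span $\NE_2(X)$. By the torus action, any effective $2$-cycle on $X$ is algebraically (hence numerically) equivalent to a torus invariant one: degenerating a cycle under a generic one-parameter subgroup of the torus produces a limit cycle supported on torus invariant subvarieties, and numerical equivalence is preserved in flat families. A torus invariant effective $2$-cycle is by definition a nonnegative combination of the $V(\tau)$. Hence every generator $[\sum a_i S_i]$ of $\NE_2(X)$ from Definition \ref{2moricone} already lies in the cone $\sum_{j} \R_{\ge 0}\,[S_j]$, and conversely each $[S_j]$ is visibly in $\NE_2(X)$. Therefore
\[
\NE_2(X) = \sum_{j=1}^{m} \R_{\ge 0}\,[S_j],
\]
a finitely generated, i.e.\ polyhedral, cone.

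The step I expect to require the most care is the reduction of an arbitrary effective $2$-cycle to a torus invariant one — the toric analogue of the statement that $\NE_1$ of a complete toric variety is generated by invariant curves. One clean way is to use the well-known fact that for a complete toric variety the numerical classes of invariant $k$-cycles already span $\N_k(X)$ and that the invariant effective cycles generate the effective cone; this is standard (see \cite{fulton}, \cite{oda}) but deserves an explicit citation or a one-line degeneration argument as above. Everything else — finiteness of the cones of $\Sigma$, the orbit-cone correspondence, and strong convexity (already established in the preceding proposition) — is routine, and strong convexity together with finite generation is exactly what ``polyhedral'' means here.
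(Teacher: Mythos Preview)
Your argument is correct and is the standard justification; the paper itself gives no proof at all, simply declaring the statement ``obvious'' for toric varieties. What you have written is exactly the reasoning one would supply to unpack that word: finitely many $(d-2)$-cones in $\Sigma$ give finitely many torus invariant surfaces, and every effective $2$-cycle degenerates under a one-parameter subgroup to an effective torus invariant cycle, so $\NE_2(X)$ is generated by these finitely many classes. Your self-identified delicate step---the reduction to invariant cycles---is indeed the only nontrivial point, and it is covered by the references \cite{fulton}, \cite{oda} (or by the Chow-group computation of Fulton--Sturmfels); a citation there is appropriate, and otherwise nothing further is needed.
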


Thus, ${\rm NE}_2(X)$ is a strongly convex polyhedral 
rational cone similarly as ${\rm NE}(X)$. 

We end this section by giving the following simple examples:
\begin{ex}
\begin{enumerate}
\item If $X=\mathbb{P}^d$, then 
$$\NE_2(X)=\mathbb{R}_{\ge 0}[S],$$
where $S$ is a plane in $X$.
\item If $X=\mathbb{P}^1\times\mathbb{P}^3$, then 
$$\NE_2(X) =\mathbb{R}_{\ge 0}[(\mbox{a point})\times 
\mathbb{P}^2]+\mathbb{R}_{\ge 0}[\mathbb{P}^1\times 
\mathbb{P}^1].$$
\item If $X=\mathbb{P}^2\times\mathbb{P}^2$, then 
$$\NE_2(X) =\mathbb{R}_{\ge 0}[(\mbox{a point})\times 
\mathbb{P}^2]+\mathbb{R}_{\ge 0}[\mathbb{P}^1\times 
\mathbb{P}^1]+\mathbb{R}_{\ge 0}[\mathbb{P}^2\times 
(\mbox{a point})].$$
\end{enumerate}
\end{ex}

\section{Combinatorial descriptions}\label{kushi}
In this section, we establish a method to describe the numerical 
class of a torus invariant subvariety. 

Let $Y=Y_\sigma\subset X$ be a torus invariant subvariety 
of $\dim Y=l$  
associated to a cone $\sigma\in\Sigma$ 
and $\G(\Sigma)=\{x_1,\ldots,x_m\}$. 
Put 
$$I_{Y/X}=I_{Y/X}(X_1,\ldots,X_m):=
\sum_{1\le i_1,\ldots,i_l\le m}
(D_{x_{i_1}}\cdots D_{x_{i_l}}\cdot Y)
X_{i_1}\cdots X_{i_l}$$
$$\in\mathbb{Z}[X_1,\ldots,X_m],$$
where $D_{x_i}$ is the torus invariant prime divisor 
corresponding to $x_i$, while $X_i$ is defined to be 
the independent variable corresponding to $x_i$. 
We will use this notation throughout this paper. 

\begin{rem}
$I_{Y/X}$ has all the informations of intersection numbers 
of $Y$ on $X$. So, we can consider $I_{Y/X}$ as 
the numerical class of $Y\in \N_l(X)$.
\end{rem}

\begin{ex}
Let $C=C_{\tau}\subset X$ be a torus invariant curve, 
where $\tau$ is a $(d-1)$-dimensional cone, 
that is, a {\em wall} in $\Sigma$. 
In this case, 
$$I_{C/X}=\sum_i (D_i\cdot C)X_i$$ 
is a polynomial of degree $1$. 
On the other hand,
$$\sum_i (D_i\cdot C)x_i=0$$ 
is the so-called {\em Reid's wall relation }
associated to the wall $\tau$ (see \cite{reid}). 
Namely, $I_{C/X}$ is calculated from the wall relation 
immediately.
\end{ex}

\begin{ex}\label{hanahir}
When $Y=X$, $I_{X/X}$ sometimes becomes a simple shape as follows:  
\begin{enumerate}
\item {\em Projective spaces}: Let $X$ be the $d$-dimensional projective space  $\bP^d$ 
and $\G(\Sigma)=\{x_1:=e_1,\ldots,x_d:=e_d,x_{d+1}:=-(e_1+\cdots+e_d)\}$. 
Then, 
$$I_{X/X}=(X_1+\cdots+X_{d+1})^d.$$
\item {\em Hirzebruch surfaces}: Let $X$ be the Hirzebruch surface $F_\alpha$ 
of degree $\alpha$ and $\G(\Sigma)=
\{x_1:=e_1,x_2:=e_2,x_3:=-e_1+\alpha e_2,x_4=-e_2\}$. Then, 
$$I_{X/X}=\alpha(X_2+X_4)^2+2(X_2+X_4)(X_1+X_3-\alpha X_2).$$
\end{enumerate}
\end{ex}

Let $X$ be a smooth projective toric variety and 
$S\subset X$ be a torus invariant {\em surface}. 
For some special cases, $I_{S/X}$ is simply 
calculated as follows. These are the main theorems 
of this paper.

\begin{thm}\label{shuteiri}
Suppose $S\cong\mathbb{P}^2$. 
Let $C\subset S$ be a torus invariant curve. Then, 
$I_{S/X}=(I_{C/X})^2.$
\end{thm}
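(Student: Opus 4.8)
The plan is to reduce the identity to elementary intersection theory on the surface $S$ itself, exploiting that $\Pic(S)$ has rank $1$. Since $S=Y_\sigma$ is a torus invariant subvariety of the smooth complete toric variety $X$, it is itself a smooth complete toric surface, with fan $\Star(\sigma)$; by hypothesis $S\cong\mathbb{P}^2$, so $\Pic(S)\cong\mathbb{Z}$ is generated by the class $\ell$ of a line, with $(\ell\cdot\ell)_S=1$, and the torus invariant prime divisors of $S$ --- the $Y_\tau$ with $\tau\supset\sigma$ and $\dim\tau=\dim\sigma+1$ --- are exactly three lines, each of class $\ell$. First I would record how the $D_{x_i}$ behave under restriction to $S$: for every $i$ the restriction $\mathcal{O}_X(D_{x_i})|_S$, which is well defined even when $S\subset D_{x_i}$, has class $a_i\ell$ in $\Pic(S)$ for a unique integer $a_i$. (Explicitly, $a_i=1$ when the ray through $x_i$ together with $\sigma$ spans one of the three cones above, $a_i=0$ when that ray and $\sigma$ span no cone of $\Sigma$, i.e.\ $D_{x_i}\cap S=\emptyset$, and $a_i$ is whatever the linear relations force when $x_i\in\sigma$; the precise values will not matter.)

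Next I would compute $I_{S/X}$. By the projection formula, for all $i$ and $j$,
\[
(D_{x_i}\cdot D_{x_j}\cdot S)_X=\bigl(\mathcal{O}_X(D_{x_i})|_S\cdot\mathcal{O}_X(D_{x_j})|_S\bigr)_S=a_ia_j\,(\ell\cdot\ell)_S=a_ia_j,
\]
so that
\[
I_{S/X}=\sum_{1\le i,j\le m}a_ia_j\,X_iX_j=\Bigl(\sum_{i=1}^{m}a_iX_i\Bigr)^{2}.
\]
For the curve, $C$ is a torus invariant curve on $S\cong\mathbb{P}^2$, hence one of the three lines, so $(\ell\cdot C)_S=1$, and again by the projection formula $(D_{x_i}\cdot C)=\bigl(\mathcal{O}_X(D_{x_i})|_S\cdot C\bigr)_S=a_i(\ell\cdot C)_S=a_i$. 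Thus $I_{C/X}=\sum_{i}a_iX_i$, and comparing the two displays gives $I_{S/X}=(I_{C/X})^2$.

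The only step that needs genuine care is the first one: the intersection numbers $(D_{x_i}\cdot D_{x_j}\cdot S)$ and $(D_{x_i}\cdot C)$ must be read off from the restricted line bundles $\mathcal{O}_X(D_{x_i})|_S$, not from a naive intersection of cycles, because $S$ or $C$ may be contained in some $D_{x_i}$. Once that is set up, the rank-$1$ Picard group of $\mathbb{P}^2$ does the rest: it forces the quadratic form $I_{S/X}$ to factor as the square of the linear form $\sum_i a_iX_i$, and this linear form is precisely $I_{C/X}$. I do not anticipate any further obstacle.
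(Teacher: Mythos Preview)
Your argument is correct. The projection formula applied to the closed immersion $\iota\colon S\hookrightarrow X$ gives $(D_{x_i}\cdot D_{x_j}\cdot S)_X=(\iota^*D_{x_i}\cdot\iota^*D_{x_j})_S$ and $(D_{x_i}\cdot C)_X=(\iota^*D_{x_i}\cdot C)_S$, and since $\Pic(\mathbb{P}^2)=\mathbb{Z}\ell$ with $\ell^2=1$ and $C\in|\ell|$, both expressions reduce to $a_ia_j$ and $a_i$ respectively for the same integers $a_i$. Your caveat about using $\mathcal{O}_X(D_{x_i})|_S$ rather than a set-theoretic intersection is exactly the point that needs care, and you handle it.

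The paper's proof reaches the same conclusion but by a more explicit combinatorial route: it names the generators $x_1,\ldots,x_{d-2}$ of the cone $\tau$ and the three adjacent rays $y_1,y_2,y_3$, writes down Reid's wall relation $y_1+y_2+y_3+a_1x_1+\cdots+a_{d-2}x_{d-2}=0$ for $C$, and then verifies $D_zD_wS=a_za_w$ case by case, using rational functions associated to a $\mathbb{Z}$-basis of $N$ to produce linear equivalences $D_{x_i}\sim a_iD_{y_3}+E_i$ with $E_i\cap S=\emptyset$. Your version abstracts away this combinatorics: you never need to identify the $a_i$ with wall-relation coefficients, only to know they exist because $\Pic(S)$ has rank one. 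The paper's approach has the virtue of making the link between $I_{C/X}$ and the wall relation visible (and it generalizes transparently to the Hirzebruch case in the next theorem, where $\Pic(S)$ has rank two and one needs two wall relations); yours is shorter and isolates the structural reason the square appears.
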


\begin{proof}
Let $\tau=\R_{\ge 0}x_1+\cdots+\R_{\ge 0}x_{d-2}\in\Sigma$ be 
the $(d-2)$-dimensional cone 
associated to $S=S_\tau$, where 
$\tau\cap\G(\Sigma)=\{x_1,\ldots,x_{d-2}\}$. 
Then, there exist exactly three 
maximal cones $\tau+\R_{\ge 0}y_1,\ \tau+\R_{\ge 0}y_2,\  
\tau+\R_{\ge 0}y_3\in\Sigma$ which contain $\tau$. 
Put $$y_1+y_2+y_3+a_1x_1+\cdots+a_{d-2}x_{d-2}=0$$ be 
the wall relation corresponding to $C$. 
For the proof, it is suffice to show that 
$$D_zD_wS=a_za_w$$ 
for any $z,\ w\in\G(\Sigma)$, 
where $D_z$ is the prime torus invariant divisor 
corresponding to $z$, while $a_z$ is the 
coefficient of $z$ in the above wall relation. 

Suppose that $z$ or $w\not\in\{x_1,\ldots,x_{d-2},y_1,y_2,y_3\}$. 
Namely, $a_z=0$ or $a_w=0$. In this case, trivially,  
$D_zS=0$ or $D_wS=0$. So,  $D_zD_wS=a_za_w=0$. 

For any $1\le i,j\le 3$, 
$$D_{y_i}D_{y_j}S=(D_{y_i}|_S)(D_{y_j}|_S)=C^2=1.$$
So, the remaining case is $z$ or $w\in\{x_1,\ldots,x_{d-2}\}$.  
By calculating the rational functions associated to a 
$\bZ$-basis $\{x_1,\ldots,x_{d-2},y_1,y_2\}$ for $N$, we have 
the relations
$$D_{x_1}-a_1D_{y_3}+E_1=0,\ \ldots,\ 
D_{x_{d-2}}-a_{d-2}D_{y_3}+E_{d-2}=0,$$
$$D_{y_1}-D_{y_3}+E_{d-1}=0,\ D_{y_1}-D_{y_3}+E_d=0$$
in $\Pic X$, where $E_1,\ldots,E_d$ are torus invariant divisors such that  
$\xSupp E_i\cap S=\emptyset$ for any $1\le i\le d$. 
Therefore, we have 
$$D_{x_1}S=a_1D_{y_3}S,\ \ldots,\ 
D_{x_{d-2}}S=a_{d-2}D_{y_3}S.$$
By these relations, the equality $D_zD_wS=a_za_w$
is obvious.
\end{proof}

\begin{thm}\label{shuteiri2}
Suppose $S\cong F_{\alpha}$, that is, 
a Hirzebruch surface of degree $\alpha$. 
Let $C_{\rm fib}\subset S$ be a fiber of the projection 
$S=F_{\alpha}\to\mathbb{P}^1$, 
while let $C_{\rm neg}$ be the negative section of $S$. Then, 
$I_{S/X}=\alpha(I_{C_{\rm fib}/X})^2
+2I_{C_{\rm fib}/X}I_{C_{\rm neg}/X}.$
\end{thm}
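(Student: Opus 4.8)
The plan is to mimic the proof of Theorem \ref{shuteiri}, tracking the extra data coming from the fact that $S=F_\alpha$ has two distinct torus-invariant curve classes instead of one. Write $\tau=\R_{\ge 0}x_1+\cdots+\R_{\ge 0}x_{d-2}\in\Sigma$ for the $(d-2)$-dimensional cone with $S=S_\tau$ and $\tau\cap\G(\Sigma)=\{x_1,\ldots,x_{d-2}\}$. Now the star of $\tau$ is the fan of $F_\alpha$, so there are exactly four maximal cones containing $\tau$, obtained by adjoining rays $\R_{\ge 0}y_1,\ldots,\R_{\ge 0}y_4$, where the images $\bar y_i$ in $N/\tau\cong\bZ^2$ are $e_1,e_2,-e_1+\alpha e_2,-e_2$ as in Example \ref{hanahir}(2). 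I would fix the ordering so that $C_{\rm fib}$ is the torus-invariant curve cut out by the wall $\tau+\R_{\ge 0}y_1$ (image $e_1$) and $C_{\rm neg}$ is the one cut out by $\tau+\R_{\ge 0}y_1$ as well — more precisely pick the curve on $S$ with self-intersection $-\alpha$ for $C_{\rm neg}$ and self-intersection $0$ for $C_{\rm fib}$, and record the corresponding wall relations
$$y_2+y_4+b_1x_1+\cdots+b_{d-2}x_{d-2}=0,\qquad y_1+y_3-\alpha y_2+c_1x_1+\cdots+c_{d-2}x_{d-2}=0,$$
these being the wall relations for $C_{\rm fib}$ and $C_{\rm neg}$ respectively (with the sign conventions matched to Example \ref{hanahir}(2)). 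Thus $I_{C_{\rm fib}/X}=\sum_i b_{x_i}X_i$ and $I_{C_{\rm neg}/X}=\sum_i c_{x_i}X_i$ where $b_z,c_z$ are the indicated coefficients, zero for $z\notin\{x_1,\ldots,x_{d-2},y_1,\ldots,y_4\}$.

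The claim then reduces, exactly as before, to the pointwise identity
$$(D_zD_w\cdot S)=\alpha\,b_z b_w+b_z c_w+c_z b_w$$
for all $z,w\in\G(\Sigma)$, since expanding $\alpha(I_{C_{\rm fib}/X})^2+2I_{C_{\rm fib}/X}I_{C_{\rm neg}/X}$ gives precisely $\sum_{z,w}(\alpha b_zb_w+b_zc_w+c_zb_w)X_zX_w$. If $z$ or $w$ lies outside $\{x_1,\ldots,x_{d-2},y_1,\ldots,y_4\}$ then the corresponding prime divisor is disjoint from $S$, so the left side is $0$, and the relevant coefficients $b,c$ vanish too, so both sides are $0$. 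For $z,w\in\{y_1,\ldots,y_4\}$ one has $(D_{y_i}D_{y_j}\cdot S)=(D_{y_i}|_S\cdot D_{y_j}|_S)$, and $D_{y_i}|_S$ is the torus-invariant divisor on $F_\alpha$ corresponding to $\bar y_i$; I would just read off the intersection matrix of $F_\alpha$ — writing $f=[C_{\rm fib}]$, $s=[C_{\rm neg}]$ with $f^2=0$, $f\cdot s=1$, $s^2=-\alpha$ — and check that $D_{y_i}|_S$ has class $\alpha\,[\![\bar y_i]\!]_b f+[\![\bar y_i]\!]_b s+\cdots$; concretely, using $e_1\sim e_3-\alpha e_2$ and $e_2\sim e_4$ in $\Pic F_\alpha$, one finds $D_{y_2}|_S=D_{y_4}|_S\equiv C_{\rm fib}$ and $D_{y_1}|_S=D_{y_3}|_S\equiv C_{\rm neg}+\alpha C_{\rm fib}$, and a direct check confirms $\alpha b_{y_i}b_{y_j}+b_{y_i}c_{y_j}+c_{y_i}b_{y_j}$ matches each of the four distinct intersection numbers.

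The remaining case is $z\in\{x_1,\ldots,x_{d-2}\}$ (with $w$ arbitrary), and here, as in Theorem \ref{shuteiri}, I would compute the rational functions attached to a $\bZ$-basis of $N$ chosen from $\{x_1,\ldots,x_{d-2},y_1,y_2\}$ (this is a basis because $\{\bar y_1,\bar y_2\}=\{e_1,e_2\}$ is a basis of $N/\tau$) to obtain linear equivalences in $\Pic X$ of the form $D_{x_i}\equiv \lambda_i D_{y_3}+\mu_i D_{y_4}+E_i$ with $\xSupp E_i\cap S=\emptyset$; restricting to $S$ gives $(D_{x_i}\cdot D_w\cdot S)$ as a combination of the $(D_{y_j}\cdot D_w\cdot S)$ already computed, and comparing with the linear relations among the $b$'s and $c$'s forced by the two wall relations yields the identity. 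The one genuinely delicate point — the place I expect to spend the most care — is fixing all the sign and ordering conventions so that the two wall relations, the identification $\Star(\tau)\cong$ fan of $F_\alpha$, and the normalization in Example \ref{hanahir}(2) are mutually consistent; once the bookkeeping is pinned down, every individual verification is a short linear-algebra computation and the case $z$ or $w\notin\{x_1,\ldots,x_{d-2},y_1,\ldots,y_4\}$ is immediate.
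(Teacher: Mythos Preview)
Your approach is essentially identical to the paper's: same setup of $\tau$, the four rays $y_1,\ldots,y_4$, the two wall relations (the paper writes $a_i$ where you write $c_i$), the reduction to the coefficient identity $(D_zD_w\cdot S)=\alpha b_zb_w+b_zc_w+c_zb_w$, the trivial case $z$ or $w\notin\{x_i,y_j\}$, the use of $I_{S/S}$ from Example~\ref{hanahir}(2) for the $y_i$--$y_j$ block, and the rational functions for the basis $\{x_1,\ldots,x_{d-2},y_1,y_2\}$ to reduce the $x_i$-cases to the $y_j$-cases. One small slip to fix when you carry this out: with your conventions $\bar y_1=e_1$, $\bar y_2=e_2$, one has $D_{y_1}|_S=D_{y_3}|_S\equiv C_{\rm fib}$, $D_{y_2}|_S\equiv C_{\rm neg}$, and $D_{y_4}|_S\equiv C_{\rm neg}+\alpha C_{\rm fib}$ (you swapped the roles of $y_1,y_3$ with $y_2,y_4$); this is exactly the bookkeeping you flagged as delicate, and once corrected the ``direct check'' goes through.
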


\begin{proof}
Let $\tau=\R_{\ge 0}x_1+\cdots+\R_{\ge 0}x_{d-2}\in\Sigma$ be 
the $(d-2)$-dimensional cone 
associated to $S=S_\tau$, where 
$\tau\cap\G(\Sigma)=\{x_1,\ldots,x_{d-2}\}$. 
Then, there exist exactly four 
maximal cones $\tau+\R_{\ge 0}y_1,\ \tau+\R_{\ge 0}y_2,\ 
\tau+\R_{\ge 0}y_3,\ 
\tau+\R_{\ge 0}y_4\in\Sigma$ which contain $\tau$. 
Put $$y_1+y_3-\alpha y_2+a_1x_1+\cdots+a_{d-2}x_{d-2}=0$$ be 
the wall relation corresponding to $C_{\rm neg}$, 
while  $$y_2+y_4+b_1x_1+\cdots+b_{d-2}x_{d-2}=0$$ be 
the wall relation corresponding to 
$C_{\rm fib}$. 
As in the proof of Theorem \ref{shuteiri}, 
by calculating the rational functions associated to a 
$\bZ$-basis $\{x_1,\ldots,x_{d-2},y_1,y_2\}$ for $N$, 
we have the relations 
$$D_{x_1}S=a_1D_{y_3}S+b_1D_{y_4}S,\ \ldots,\ 
D_{x_{d-2}}S=a_{d-2}D_{y_3}S+b_{d-2}D_{y_4}S,$$
$$D_{y_1}S=D_{y_3}S,\ D_{y_2}=-\alpha D_{y_3}S+D_{y_4}S.$$

First, we remark that 
For any $1\le i,j\le 4$, 
$$D_{y_i}D_{y_j}S=(D_{y_i}|_S)(D_{y_j}|_S)$$
on $S$. So, these intersection numbers can be recovered from 
$I_{S/S}$ (see Example \ref{hanahir}). 

The above relations say that for any $1\le i,j\le d-2$,
$$D_{x_i}D_{x_j}S=\alpha b_ib_j+a_ib_j+a_jb_i,$$
while for any $1\le i\le d-2$,
$$D_{y_1}D_{x_i}=b_i,\ D_{y_2}D_{x_i}=a_i,\ 
D_{y_3}D_{x_i}=b_i,\ D_{y_4}D_{x_i}=a_i+\alpha b_i.$$

On the other hand, 
put $f_1=f_1(X_1,\ldots,X_{d-2}):=
a_1X_1+\cdots+a_{d-2}X_{d-2}$ and 
$f_2=f_2(X_1,\ldots,X_{d-2}):=
b_1X_1+\cdots+b_{d-2}X_{d-2}$. Then, 
$$\alpha(I_{C_{\rm fib}/X})^2
+2I_{C_{\rm fib}/X}I_{C_{\rm neg}/X}$$
$$=
\alpha(Y_2+Y_4+f_1)^2
+2(Y_2+Y_4+f_1)(Y_1+Y_3-\alpha Y_2+f_2)$$
$$=I_{S/S}(Y_1,Y_2,Y_3,Y_4)+\alpha f_2^2+2f_1f_2$$
$$+2Y_1f_2+2Y_2f_1+2Y_3f_2+Y_4(2f_1+2\alpha f_2).$$
This coincides with $I_{S/X}$ by the above calculations.
\end{proof}

\section{2-Fano manifolds}\label{zelda}

As an application of Section \ref{kushi}, we studies on {\em toric $2$-Fano 
manifolds} in this section. The notion of 
$2$-Fano manifolds was introduced in \cite{starr}. 

\begin{defn}
A smooth projecive algebraic variety $X$ is a {\em Fano manifold} if 
its first Chern class ${\rm c}_1(X)=-K_X$ is an ample divisor. 
\end{defn}

\begin{defn}\cite{starr}\label{2fanodef}
A Fano manifold $X$ is a {\em $2$-Fano manifold} if 
its second Chern {\em character} ${\rm ch}_2(X)=\frac{1}{2}({\rm c}_1(X)^2-2{\rm c}_2(X))$ 
is a nef $2$-cocycle. 
\end{defn}

\begin{rem}
Since a $2$-Fano manifod is a Fano manifold by the definition, 
for the classification of toric $2$-Fano manifolds, 
only we have to do is to check the list of toric Fano manifolds. 
The classification of toric Fano manifolds can be done by the 
algorithm of \O bro \cite{obro} for any dimension. 
\end{rem}

For a projective toric manifold $X$, one can easily see that 
${\rm ch}_2(X)=\frac{1}{2}\sum_{i=1}^mD_i^2$, 
where $D_1,\ldots,D_m$ are the torus invariant prime divisors. 
So, the following is immediate.

\begin{prop}
For a torus invariant surface $S\subset X$, put $I_{S/X}:=\sum_{i,j}a_{ij}X_iX_j$. 
Then, $({\rm ch}_2(X)\cdot S)=\frac{1}{2}\sum_{i=1}^ma_{ii}$.
\end{prop}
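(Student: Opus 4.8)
The plan is to compute $({\rm ch}_2(X)\cdot S)$ directly from the formula ${\rm ch}_2(X)=\frac12\sum_{i=1}^m D_i^2$, which is recalled in the paragraph preceding the statement. Intersecting both sides with the $2$-cycle $S$ and using linearity of the intersection pairing gives $({\rm ch}_2(X)\cdot S)=\frac12\sum_{i=1}^m (D_i^2\cdot S)$, so the whole task reduces to identifying each self-intersection number $(D_i^2\cdot S)$ with a coefficient of the polynomial $I_{S/X}$.

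The second and main step is to unwind the definition of $I_{S/X}$. By definition,
$$I_{S/X}=\sum_{1\le i_1,i_2\le m}(D_{x_{i_1}}D_{x_{i_2}}\cdot S)\,X_{i_1}X_{i_2},$$
so when we collect terms we must account for the fact that the pair $(i,j)$ with $i\neq j$ occurs twice in the double sum (once as $(i,j)$, once as $(j,i)$), both contributing $(D_i D_j\cdot S)$, whereas the pair $(i,i)$ occurs once and contributes $(D_i^2\cdot S)$. Hence if we write $I_{S/X}=\sum_{i,j}a_{ij}X_iX_j$ in the natural way — that is, $a_{ii}$ is the coefficient of $X_i^2$ and $a_{ij}$ ($i\ne j$) is the coefficient of $X_iX_j$ — one reads off $a_{ii}=(D_i^2\cdot S)$ directly (and $a_{ij}=2(D_iD_j\cdot S)$ off-diagonal, though that is not needed here). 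Substituting $a_{ii}=(D_i^2\cdot S)$ into the expression from the first step yields $({\rm ch}_2(X)\cdot S)=\frac12\sum_{i=1}^m a_{ii}$, which is exactly the claim.

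There is essentially no obstacle here: the only point requiring any care is the bookkeeping convention for the coefficients $a_{ij}$ of the quadratic form $I_{S/X}$, namely being explicit that $a_{ii}$ denotes the coefficient of the square term $X_i^2$ (rather than, say, half of it), so that the ``diagonal'' coefficients are precisely the self-intersections $(D_i^2\cdot S)$ with no stray factor of $2$. Once that convention is fixed the result is immediate. I would therefore present it as a short two-line computation, perhaps prefacing it with a sentence fixing the meaning of $a_{ii}$.
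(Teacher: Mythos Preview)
Your proposal is correct and matches the paper's approach exactly: the paper does not give a separate proof but simply declares the proposition ``immediate'' from the formula ${\rm ch}_2(X)=\frac12\sum_i D_i^2$, and what you have written is precisely the two-line unpacking of that word. Your care about the bookkeeping convention for $a_{ii}$ is well placed, and either reading of $\sum_{i,j}a_{ij}X_iX_j$ (ordered pairs with $a_{ij}=(D_iD_j\cdot S)$, or collected monomial coefficients) gives $a_{ii}=(D_i^2\cdot S)$, so there is no ambiguity in the diagonal terms.
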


\begin{say}
First of all, we classify toric $2$-Fano manifolds of Picard number $2$. 
So, let $X$ be a complete toric manifold of $\rho(X)=2$. 
In this case, the structure of $X$ is very simple as follows:

\begin{thm}\cite{klein1}\label{kleinyone}
Every complete toric manifold of Picard number $2$ 
is a projective space bundle over a projective space. 
\end{thm}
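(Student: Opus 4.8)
The plan is to reduce the classification of complete toric manifolds with $\rho(X)=2$ to an explicit combinatorial normal form for the fan, and then recognize that normal form as a $\mathbb P^s$-bundle over $\mathbb P^r$. First I would invoke the theory of \emph{primitive collections} and \emph{primitive relations} for complete toric manifolds (Batyrev): since $\rho(X)=2$, there are exactly two primitive collections $P_1$ and $P_2$, which partition $\G(\Sigma)$ into two disjoint nonempty sets. Writing $\G(\Sigma)=\{v_1,\dots,v_{r+1}\}\sqcup\{w_1,\dots,w_{s+1}\}$ with $P_1=\{v_1,\dots,v_{r+1}\}$ and $P_2=\{w_1,\dots,w_{s+1}\}$, one has $r+1+s+1=d+2$, i.e.\ $r+s=d$.

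Next I would analyze the two primitive relations. The relation for $P_1$ reads $v_1+\cdots+v_{r+1}=\sum_i b_i w_i$ with $b_i\ge 0$ integers (the right-hand side lies in the cone spanned by a face, and by smoothness the coefficients are nonnegative integers), and the relation for $P_2$ reads $w_1+\cdots+w_{s+1}=0$ (up to relabeling, the ``other'' primitive relation must have all coefficients $0$ on the $v$-side, since otherwise both relations would have positive coefficients on opposite sides, forcing a contradiction with strong convexity / completeness). The second relation $\sum w_i=0$ together with the fact that $\{w_1,\dots,w_{s+1}\}$ generates a saturated rank-$s$ sublattice shows the $w_i$ span the fan of $\mathbb P^s$. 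I would then choose coordinates: pick $w_1,\dots,w_s$ as part of a $\mathbb Z$-basis of $N$, complete it by $v_1,\dots,v_r$ (using that the remaining generators project to a basis of the quotient), so that $w_{s+1}=-(w_1+\cdots+w_s)$, and $v_{r+1}=-(v_1+\cdots+v_r)+\sum_i b_i w_i$ after absorbing the $b_i$ appropriately. In these coordinates the fan $\Sigma$ is visibly the fan of the projectivization $\mathbb P(\mathcal O\oplus\mathcal O(b_1)\oplus\cdots\oplus\mathcal O(b_s))$ — more precisely $\mathbb P(\bigoplus \mathcal O_{\mathbb P^r}(a_j))$ for suitable $a_j$ — over $\mathbb P^r$, since the maximal cones are exactly those obtained by omitting one $v$ and one $w$, which is the standard combinatorial description of a split projective bundle over projective space.

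The main obstacle I expect is the bookkeeping in the second paragraph: rigorously justifying that one of the two primitive relations has the form $\sum w_i=0$ (with no $v$-terms), and that the coefficients $b_i$ in the other relation are nonnegative so that one genuinely gets a vector bundle rather than just a Seifert-type fibration. This is where one uses completeness and smoothness of $X$ essentially: the argument is that if both primitive relations had nonzero ``cross terms'', the lattice points could not close up into a complete fan, and smoothness forces the structure constants to be integers of the right sign. Once that dichotomy is pinned down, the identification with the projective bundle fan is a direct coordinate computation. I would present the argument by first recalling the primitive-collection machinery, then establishing the normal form of the two relations, and finally writing down the fan of $\mathbb P_{\mathbb P^r}(\mathcal E)$ and matching cones. (In a self-contained treatment one could alternatively cite Kleinschmidt's original paper directly, but since the statement is attributed to \cite{klein1} I would at minimum sketch the primitive-relation proof for completeness.)
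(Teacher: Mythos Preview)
The paper does not prove this theorem at all: it is stated with attribution to \cite{klein1} (Kleinschmidt) and immediately used as a black box to set up the normal form for $X$. So there is no argument in the paper to compare your proposal against.

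Your sketch via primitive collections is essentially the standard modern route to Kleinschmidt's theorem (the reformulation due to Batyrev), and the outline is sound. The one place where your write-up is genuinely incomplete is the assertion that for $\rho(X)=2$ there are \emph{exactly two} primitive collections and that they \emph{partition} $\G(\Sigma)$ into two disjoint sets; this is the heart of the matter and is not automatic from $\rho=2$ alone without further argument (one must rule out, e.g., overlapping primitive collections, which uses completeness and smoothness in an essential way). You correctly flag the sign/vanishing dichotomy for the cross terms as the main obstacle, and once that and the partition statement are established the identification with the fan of $\bP_{\bP^r}\bigl(\mathcal{O}\oplus\mathcal{O}(a_1)\oplus\cdots\oplus\mathcal{O}(a_s)\bigr)$ is indeed a direct check. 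For the purposes of this paper, however, simply citing Kleinschmidt as the author does is entirely appropriate.
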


By this Theorem \ref{kleinyone}, we can put
$$X=X_\Sigma=\mathbb{P}_{\mathbb{P}^{n-1}}
\left(\mathcal{O}\oplus\mathcal{O}(a_1)\oplus
\cdots\oplus\mathcal{O}(a_{m-1})\right),$$
where $a_1\ge\cdots\ge a_{m-1}\ge 0$, $m+n-2=d:=\dim X$. 
Let 
\begin{eqnarray}
\label{prim01} x_1+\cdots+x_m &=& 0,\\
\label{prim02} y_1+\cdots+y_n &=& 
a_1x_1+\cdots+a_{m-1}x_{m-1},
\end{eqnarray}
be the wall relations of $\Sigma$ 
which correspond to the extremal rays of $\NE(X)$,  
where $${\rm G}(\Sigma)=
\{x_1,\ldots,x_m,y_1,\ldots,y_{n}\}.$$
Let 
$C_1$ and $C_2$ be the extremal 
torus invariant curves corresponding to 
the wall relations $(1)$ and $(2)$, respectively.

First, we determine the extremal rays of $\NE_2(X)$. 
By calculating the rational functions for a $\mathbb{Z}$-basis 
$\{x_1,\ldots,x_{m-1},y_1,\ldots,y_{n-1}\}$, 
we have 
the relations 
$$D_1-D_m+a_1E_n=0,\ldots,
D_{m-1}-D_m+a_{m-1}E_n=0,$$
$$E_1-E_n=0,\ldots,
E_{n-1}-E_n=0$$
in $\N^1(X)$, 
where $D_1,\ldots,D_m,E_1,\ldots,E_n$ are 
torus invariant prime divisors corresponding to 
$x_1,\ldots,x_m,y_1,\ldots,y_n$.
Therefore, for $1\le i,j \le m-1$,
\begin{eqnarray}\label{dou}
D_j=D_i+(a_i-a_j)E_n,
\end{eqnarray}
and
\begin{eqnarray}\label{ikeru}
E_1=E_2=\cdots=E_n.
\end{eqnarray}
On the other hand, every $(d-2)$-dimensional cone $\tau\in\Sigma$ is 
expressed as 
$$\tau=\R_{\ge 0} x_{i_1}+\cdots+\R_{\ge 0} x_{i_k}+
\R_{\ge 0} y_{j_1}+\cdots+\R_{\ge 0} y_{j_l}$$
for some $1\le i_1<\cdots<i_k\le m$, 
$1\le j_1<\cdots<j_l\le n$ 
such that $k<m$, $l<n$ and $k+l=d-2$. 
So, the corresponding torus invariant surface $S_\tau$ 
is expressed as 
$$S_{\tau}=D_{i_1}\cdots D_{i_k}E_{j_1}\cdots E_{j_l}
\in \N_2(X).$$
By using (\ref{dou}) and (\ref{ikeru}), 
any $S_\tau$ is expressed as a linear combination of 
$2$-cycles 
$$D_1\cdots D_pE^q\qquad (p\le m-1,\ q\le n-1,\ p+q=d-2)$$
whose coefficients are non-negative, because  
$i<j$ implies $a_i-a_j\ge 0$. 
Moreover, since $D_1\cdots D_m=E_1\cdots E_n=0$ 
by wall relations (\ref{prim01}) and 
(\ref{prim02}), 
the possibilities for the generators of $\NE_2(X)$ are 
$$
S_1:=D_1\cdots D_{m-3}E^{n-1}, 
S_2:=D_1\cdots D_{m-2}E^{n-2}\mbox{ or}$$
$$
S_3:=D_1\cdots D_{m-1}E^{n-3}.
$$
In fact, the following holds: 
\begin{eqnarray*}
\NE_2(X)  =  & \R_{\ge 0}S_1
+\R_{\ge 0}S_2+
\R_{\ge 0}S_3  & \mbox{if }  m\ge 3,\ n\ge 3. \\
\NE_2(X)  =  & \R_{\ge 0}S_2+
\R_{\ge 0}S_3 & \mbox{if }  m=2,\ n\ge 3. \\
\NE_2(X)  =  & \R_{\ge 0}S_1
+\R_{\ge 0}S_2 & \mbox{if }   m\ge 3,\ n=2.
\end{eqnarray*}
For each case, 
$\dim \N_2(X)=3$, $\dim \N_2(X)=2$ and $\dim \N_2(X)=2$, 
respectively. 
So, 
$\NE_2(X)$ is a {\em simplicial} cone 
for each case, 
and $S_1$, $S_2$ and $S_3$ are extremal surfaces.

Next, 
we will check when $X$ becomes a $2$-Fano manifold.

So, let $C_2$ be the torus invariant curve which generates 
the extremal ray corresponding to 
the wall relation (\ref{prim02}). Then, 
$$
(-K_X\cdot C_2)=n-\left(a_1+\cdots +a_{m-1}\right).
$$
Therefore, 
$X$ is a Fano manifold if and only if 
\begin{eqnarray}\label{mukatuku}
n-\left(a_1+\cdots +a_{m-1}\right)>0.
\end{eqnarray}

Since $S_1\cong S_3\cong \bP^2$, 
$({\rm ch}_2(X)\cdot S_1)\ge 0$ and 
$({\rm ch}_2(X)\cdot S_3)\ge 0$ 
are trivial by Theorem \ref{shuteiri}. 

On the other hand, we can easily check that 
$S_2\cong F_{a_{m-1}}$.  
By Theorem \ref{shuteiri2}, 
we have 
$$I_{S_2}=
a_{m-1}(I_{C_1})^2
+2I_{C_1}I_{C_2}
=
a_{m-1}\left(X_1+\cdots+X_m\right)^2$$
$$+
2\left(X_1+\cdots+X_m\right)\left(Y_1+\cdots+Y_n-
\left(a_1X_1+\cdots+a_{m-1}X_{m-1}\right)\right)
.$$
So, we obtain 
\begin{eqnarray}\label{kusuri}
({\rm ch}_2(X)\cdot S_2)=ma_{m-1}-2(a_1+\cdots+a_{m-1}).
\end{eqnarray}
In (\ref{kusuri}), suppose that $m\ge 3$ and 
$({\rm ch}_2(X)\cdot S_2)\ge 0$. Then,
$$
({\rm ch}_2(X)\cdot S_2)=(m-2)a_{m-1}-2(a_1+\cdots+a_{m-2}).
$$
The assumption $a_1\ge \cdots\ge a_{m-1}\ge 0$ says that 
$a_1=\cdots=a_{m-1}=0,$ that is, 
$X\cong\bP^{m-1}\times\bP^{n-1}$. 
On the other hand, suppose that $m=2$ in (\ref{kusuri}). 
Then, 
$({\rm ch}_2(X)\cdot S_2)=0$, that is, ${\rm ch}_2(X)$ is nef. 

By (\ref{mukatuku}), we can summarize as follows:

\begin{thm}
If $X$ is a toric $2$-Fano manifold of Picard number $2$, 
then $X$ is one of the following$:$
\begin{enumerate}
\item A direct product of projective spaces.
\item $\bP_{\bP^{d-1}}(\mathcal{O}\oplus\mathcal{O}(a))$ 
$(1\le a\le d-1)$.
\end{enumerate}
\end{thm}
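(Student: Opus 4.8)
The plan is to run the analysis that was already set up in the preceding paragraphs and simply collect the surviving cases. Recall that by Theorem~\ref{kleinyone} we may write $X=\bP_{\bP^{n-1}}(\mathcal{O}\oplus\mathcal{O}(a_1)\oplus\cdots\oplus\mathcal{O}(a_{m-1}))$ with $a_1\ge\cdots\ge a_{m-1}\ge 0$ and $m+n-2=d$, and that the $2$-Mori cone is simplicial with extremal surfaces among $S_1\cong\bP^2$, $S_2\cong F_{a_{m-1}}$ and $S_3\cong\bP^2$. First I would note that, since $\mathrm{ch}_2(X)$ is nef precisely when $(\mathrm{ch}_2(X)\cdot S)\ge 0$ for all generators $S$ of $\NE_2(X)$, and since Theorem~\ref{shuteiri} gives $(\mathrm{ch}_2(X)\cdot S_i)\ge 0$ automatically for the $\bP^2$-type surfaces $S_1,S_3$, the only real condition is $(\mathrm{ch}_2(X)\cdot S_2)\ge 0$. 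Using Theorem~\ref{shuteiri2} to compute $I_{S_2}$ one reads off, via the Proposition expressing $(\mathrm{ch}_2(X)\cdot S)$ as $\tfrac12\sum a_{ii}$, the formula $(\mathrm{ch}_2(X)\cdot S_2)=ma_{m-1}-2(a_1+\cdots+a_{m-1})$, which is exactly equation~(\ref{kusuri}).

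The next step is a short case split on $m$. If $m\ge 3$, rewrite the inequality $(\mathrm{ch}_2(X)\cdot S_2)\ge 0$ as $(m-2)a_{m-1}\ge 2(a_1+\cdots+a_{m-2})$; since $a_1\ge\cdots\ge a_{m-1}\ge 0$, the left side is at most $(m-2)a_{m-2}\le$ the right side, with equality forcing $a_1=\cdots=a_{m-1}=0$, i.e.\ $X\cong\bP^{m-1}\times\bP^{n-1}$, a direct product of projective spaces. If $m=2$, then $(\mathrm{ch}_2(X)\cdot S_2)=2a_1-2a_1=0$, so the nefness condition is automatic and $X=\bP_{\bP^{n-1}}(\mathcal{O}\oplus\mathcal{O}(a_1))$ for any $a_1\ge 0$; here $n-1=d-1$, and the Fano condition~(\ref{mukatuku}), namely $n-a_1>0$, translates to $a_1\le d-1$, while $a_1=0$ again gives $\bP^1\times\bP^{d-1}$ which falls under case~(1). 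Finally I would also dispatch the degenerate value $n=2$ (so $m=d$): then only $S_1,S_2$ generate $\NE_2(X)$, and the same inequality $(m-2)a_{m-1}\ge 2(a_1+\cdots+a_{m-2})$ again forces all $a_i=0$, giving $\bP^{d-1}\times\bP^1$.

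Assembling the three cases: either all $a_i$ vanish (a product of two projective spaces), or $m=2$ and $X=\bP_{\bP^{d-1}}(\mathcal{O}\oplus\mathcal{O}(a))$ with $1\le a\le d-1$. This is exactly the stated list, so the proof is complete. The only mildly delicate point is bookkeeping the boundary cases $m=2$ and $n=2$ so that the $2$-Mori cone has the right generators and no spurious condition is imposed or omitted; the inequality manipulation itself is entirely routine given formula~(\ref{kusuri}).
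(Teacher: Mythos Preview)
Your proposal is correct and follows essentially the same line as the paper: reduce to the generators $S_1,S_2,S_3$ of $\NE_2(X)$, dispose of the $\bP^2$-type surfaces via Theorem~\ref{shuteiri}, and then use formula~(\ref{kusuri}) together with the ordering $a_1\ge\cdots\ge a_{m-1}\ge 0$ to force all $a_i=0$ when $m\ge 3$, while the Fano bound~(\ref{mukatuku}) handles $m=2$. Your explicit treatment of the boundary case $n=2$ and the intermediate step through $(m-2)a_{m-2}$ are minor elaborations, but the argument is the same as the paper's.
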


\begin{rem}
This calculation shows that 
there exist {\em infinitely} many 
projective toric manifolds of fixed dimension $d$ 
whose second Chern character is nef.  
\end{rem}

\end{say}

\begin{say}
Next, we consider the classification of toric $2$-Fano manifolds of 
a fixed dimension $d$. For $d\le 4$, fortunately, these classifications 
can be done by only Theorems \ref{shuteiri} and \ref{shuteiri2}. 
The classification list is as follows (see \cite{nobili} for the detail):
\begin{table}[h]
\begin{center}
\begin{tabular}{|c||c|c|c|c|}
\hline
$d$ & $1$ & $2$ & $3$ & $4$ \\ \hline\hline
$\#$ of toric Fano & $1$ & $5$ & $18$ & $124$ \\ \hline
$\#$ of toric $2$-Fano & $1$ & $3$ & $8$ & $25$ \\ \hline
\end{tabular}
\end{center}
\end{table}

Since there exist $124$ smooth toric Fano $4$-folds, 
it is hard to check all the smooth toric Fano $4$-folds. 
However, by using the following trivial Lemma \ref{ineq}, 
we can omit a large part of the calculations.

\begin{lem}\label{ineq}
Let $X$ be a $4$-dimensional toric $2$-Fano manifold. 
Then, $${\rm c}_1^4(X)-2{\rm c}_1^2(X){\rm c}_2(X)\ge 0.$$
\end{lem}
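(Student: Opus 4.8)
The plan is to rewrite the quantity ${\rm c}_1^4(X)-2{\rm c}_1^2(X){\rm c}_2(X)$ as an intersection number involving ${\rm ch}_2(X)$ and then apply the hypothesis that ${\rm ch}_2(X)$ is nef. Concretely, from ${\rm ch}_2(X)=\frac12({\rm c}_1^2(X)-2{\rm c}_2(X))$, intersecting both sides with the codimension-two class ${\rm c}_1^2(X)$ gives
$$2\,({\rm ch}_2(X)\cdot {\rm c}_1^2(X))={\rm c}_1^4(X)-2{\rm c}_1^2(X){\rm c}_2(X),$$
so the lemma is equivalent to the inequality $({\rm ch}_2(X)\cdot {\rm c}_1^2(X))\ge 0$.

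The one thing to check is that ${\rm c}_1^2(X)$, viewed in $\N_2(X)$, lies in $\NE_2(X)$. Since $X$ is Fano, $-K_X={\rm c}_1(X)$ is ample, so $-mK_X$ is very ample for $m\gg 0$. Taking two general members $H_1,H_2\in|-mK_X|$, Bertini's theorem shows $S:=H_1\cap H_2$ is a surface (for $\dim X=4$ a smooth connected one), and since the intersection is proper its class satisfies $[S]=[H_1]\cdot[H_2]=(-mK_X)^2=m^2\,{\rm c}_1^2(X)$ in $\N_2(X)$. Because $S$ is an effective $2$-cycle, $[S]\in\NE_2(X)$, hence ${\rm c}_1^2(X)=\frac{1}{m^2}[S]\in\NE_2(X)$.

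Now, as $X$ is a $2$-Fano manifold, ${\rm ch}_2(X)$ is a nef $2$-cocycle and therefore pairs nonnegatively with every class in $\NE_2(X)$; applied to ${\rm c}_1^2(X)$ this gives $({\rm ch}_2(X)\cdot {\rm c}_1^2(X))\ge 0$, which by the displayed identity is exactly the claimed inequality. I do not expect any real obstacle here: the only step worth a comment is the claim ${\rm c}_1^2(X)\in\NE_2(X)$, namely that the self-intersection of an ample divisor is represented by an effective $2$-cycle, which is the elementary complete-intersection observation above and is what makes the lemma ``trivial''. (Alternatively, for toric $X$ one could expand ${\rm c}_1^2(X)=(\sum_i D_i)^2$ and use the divisor relations from Section \ref{kushi} to exhibit it as a nonnegative combination of torus-invariant surface classes, but the Bertini argument is shorter and dimension-independent.)
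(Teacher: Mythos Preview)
Your proof is correct and is precisely the ``trivial'' argument the paper has in mind: the paper gives no explicit proof of this lemma, but the intended reasoning is exactly to observe that ${\rm c}_1^4-2{\rm c}_1^2{\rm c}_2=2\,({\rm ch}_2(X)\cdot{\rm c}_1(X)^2)$ and that ${\rm c}_1(X)^2$ lies in $\NE_2(X)$ because $-K_X$ is ample. One cosmetic remark: you only need $S=H_1\cap H_2$ to be an effective $2$-cycle, not smooth or connected, so invoking Bertini (which can be delicate over an arbitrary algebraically closed field) is more than necessary; moving $H_1,H_2$ into general position already gives a proper intersection and hence an effective cycle representing $m^2{\rm c}_1(X)^2$.
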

For any smooth toric Fano $4$-fold $X$, 
${\rm c}_1^4(X)$ and ${\rm c}_1^2(X){\rm c}_2(X)$ 
are calculated in \cite{bat4}. 
One can see that for $52$ smooth toric Fano $4$-folds, they are 
{\em not} $2$-Fano manifolds by Lemma \ref{ineq}. 

\end{say}

\end{document}